\newtheorem{theorem}{Theorem}[section]
\newtheorem{lemma}[theorem]{Lemma}
\theoremstyle{definition}
\newtheorem{definition}[theorem]{Definition}
\newtheorem{example}[theorem]{Example}
\newtheorem{claim}[theorem]{Claim}
\newtheorem{cor}[theorem]{Corollary}
\newtheorem{conj}[theorem]{Conjecture}
\newtheorem{theoremx}{Theorem}
\newtheorem{corx}[theoremx]{Corollary}
\theoremstyle{remark}
\newtheorem{remark}[theorem]{Remark}
\numberwithin{equation}{section}
\newcommand{\defeq}{\vcentcolon=}
\newcommand{\cl}{{\rm cl}}
\newcommand{\scl}{{\rm scl}}
\begin{document}
	\title{Spectral Gap of scl in Free Products}
	\author{Lvzhou Chen}
	\address{Department of Mathematics, University of Chicago, Chicago, Illinois, 60637}
	\email{lzchen@math.uchicago.edu}
	
	
	\date{Nov. 28, 2016. Revised on July 18, 2017.}
	
	
	\begin{abstract}
		Let $G=*_\lambda G_\lambda$ be a free product of torsion-free groups, and let $g\in[G,G]$ be any element not conjugate into a $G_\lambda$. Then $\scl_G(g)\ge1/2$. This generalizes, and gives a new proof of a theorem of Duncan--Howie \cite{DH}.
	\end{abstract}
	\maketitle

	\section{Introduction}
	For any group $G$, let $[G,G]$ denote its commutator subgroup. For any $g\in[G,G]$, the \textit{commutator length} $\cl(g)$ is the minimal number $n$ such that $$g=[a_1,b_1][a_2,b_2]\cdots[a_n,b_n]$$ for some $a_i,b_i\in G$, and the \textit{stable commutator length} $\scl(g)$ is the limit $$\lim_{n\to\infty}\frac{\cl(g^n)}{n}.$$
	
	The \textit{spectrum} of $\scl$ is the set of values of $\scl(g)$ as $g$ runs over elements of $[G,G]$.
	
	\subsection{Main results}In this paper, our main result is
	\begin{theoremx}\label{fprod}
		Let $G=*_\lambda G_\lambda$ be a free product of torsion-free groups, and suppose $g\in [G,G]$ is not conjugate into any $G_\lambda$, then $$\scl_G(g)\ge1/2.$$
	\end{theoremx}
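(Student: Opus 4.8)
The plan is to work with \emph{admissible surfaces}. Realize each $G_\lambda$ by an aspherical complex $X_\lambda$ and set $X=\bigvee_\lambda X_\lambda$, so that $X$ is aspherical with $\pi_1(X)=G$, and realize $g$ by a loop $\gamma$ traversing the cyclically reduced normal form $g=g_1g_2\cdots g_m$; since $g$ is not conjugate into any $G_\lambda$, the syllable length satisfies $m\ge 2$. Recall that $\scl_G(g)=\inf_S \frac{-\chi^-(S)}{2\,n(S)}$, the infimum over compact oriented surfaces $S$ with maps $f\colon S\to X$ whose restriction to $\partial S$ factors through $\gamma$ with total winding number $n(S)>0$. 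By the usual reductions --- and using that $G$, being a free product of torsion-free groups, is torsion-free, so $g$ has infinite order --- it suffices to treat $S$ all of whose components have negative Euler characteristic, so that $-\chi^-(S)=-\chi(S)$, and to prove the purely topological inequality $-\chi(S)\ge n(S)$.

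First I would put $f$ into \emph{normal form} for the wedge decomposition: after a homotopy, the preimage $\Gamma$ of the wedge point is a properly embedded $1$-manifold meeting $\partial S$ exactly in the $m\,n(S)$ points where $\partial f$ passes through the wedge point, and cutting $S$ along $\Gamma$ decomposes it into \emph{pieces} $P$, each mapping into a single $X_\lambda$, with $\partial P$ alternating between \emph{syllable arcs} (each spelling one $g_i$, read in the boundary orientation) and \emph{$\Gamma$-arcs} (each contractible in $X$). Using asphericity of the $X_\lambda$ I would then simplify $\Gamma$ as far as possible --- discarding circles of $\Gamma$ that bound disks or map null-homotopically into an $X_\lambda$, compressing, and cancelling bigons --- to reach a \emph{reduced} normal form, in which the word read around $\partial P$ is trivial in the relevant $G_\lambda$ for every $P$. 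In particular a reduced disk piece carries at least two syllable arcs: one carrying none would be bounded by a circle of $\Gamma$ and hence removable, and one carrying a single syllable arc would force some $g_i=1$, contradicting reducedness of $\gamma$.

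The core is an Euler characteristic count. Cutting along the $a$ arcs of $\Gamma$ raises the total Euler characteristic by $a$ while the circles contribute nothing, so $\sum_P\chi(P)=\chi(S)+a$; since $a=\tfrac{m}{2}n(S)$ in this normalization, the target becomes $\sum_P\chi(P)\le\tfrac{m-2}{2}\,n(S)$, whose right-hand side is $\ge 0$ precisely because $m\ge 2$. Only the \emph{light} pieces --- disks ($\chi=1$) and annuli or M\"obius bands ($\chi=0$) --- can raise this sum, and controlling them is where \emph{torsion-freeness} of the factors is essential. A reduced disk over $X_\lambda$ whose boundary reads a nontrivial power $g_i^{k}$ ($k\ge 2$) of a single syllable, and more generally a light piece or a cyclic chain of light pieces whose gluing pattern closes up, would force a nontrivial element of some $G_\lambda$ to have finite order (or to be conjugate to itself in a torsion-producing way) --- impossible when $G_\lambda$ is torsion-free. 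In the tight case $m=2$, where all syllables over a given factor are \emph{equal}, this already excludes every disk piece, so $\sum_P\chi(P)\le 0$ follows at once; for $m\ge 3$ one runs a discharging (Gauss--Bonnet) estimate, assigning to each piece the curvature $\chi(P)$, distributing it over the incident syllable and $\Gamma$-arcs, and using the exclusions above to cap what each arc absorbs. Either way $\sum_P\chi(P)\le\tfrac{m-2}{2}n(S)$, hence $-\chi(S)\ge n(S)$ and $\scl_G(g)\ge 1/2$.

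I expect the main obstacle to be exactly this analysis of light pieces: arranging the reduction so that light pieces are genuinely rigid, and calibrating the discharging so that torsion-freeness delivers the sharp constant rather than something weaker. This is the point at which Duncan--Howie used the stronger local indicability of the factors, via van Kampen diagrams rather than surfaces; carrying it out within the surface framework under only the torsion-free hypothesis is the new input. The estimate is tight: for $[a,b]$ in the free group on $a$ and $b$, a free product of two infinite cyclic groups, the once-punctured torus is an admissible surface with $-\chi=1=n$, so $1/2$ cannot be improved. One could alternatively argue dually via Bavard duality, exhibiting for each such $g$ a homogeneous quasimorphism $\phi$ with $\phi(g)\ge 1$ and defect $D(\phi)\le 1$; but producing such $\phi$ uniformly over all admissible $g$ looks more delicate than the surface estimate, so I would take the surface route.
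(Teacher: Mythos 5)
Your setup is essentially the paper's: realize $G$ on a wedge of aspherical complexes, put an admissible surface in normal form with respect to the preimage of the wedge point, cut into pieces mapping to single factors, and reduce the inequality $-\chi(S)\ge n(S)$ to the statement that the (normalized) number of disk pieces is at most $\tfrac{m-2}{2}$ rather than the naive $\tfrac{m}{2}$ coming from ``each reduced disk carries at least two syllable arcs.'' But the step that recovers the missing $n(S)$ is exactly the part you leave as a sketch, and the mechanism you propose for it is wrong. Torsion-freeness does \emph{not} exclude disk pieces, nor ``cyclic chains of light pieces'': a disk over $X_\lambda$ whose boundary reads $a_1a_2=1$ with $a_2=a_1^{-1}$ is reduced, forces no torsion, and occurs even in extremal surfaces (the once-punctured torus for $[a,b]$ decomposes with such a disk on each side). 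What torsion-freeness actually rules out is only the disk whose syllable arcs all carry the \emph{same} syllable $a_i$, since its boundary word is $a_i^k=1$. For $m\ge 3$ your ``discharging over incident arcs, capping what each arc absorbs'' has no stated cap and, as far as I can see, no local cap exists: the obstruction to too many disks is global, not local.

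The paper's replacement for your missing step is a combinatorial counting argument. Index the syllables, encode the $\Gamma$-arcs of the $A$-side as a doubly stochastic matrix $(x_{ij})$ (arc from syllable $i$ to syllable $j$), and observe that every disk piece determines a cyclic sequence of indices $i_1,\dots,i_k$ with $\prod a_{i_j}=1$; either the sequence is constant (impossible for torsion-free factors, or forcing $k\ge N$ in general) or it contains both a strict ascent and a strict descent. Hence the number of $A$-disks is bounded by the strictly upper-triangular mass of the matrix, and likewise for the $B$-disks after the index shift $\phi(i,j)=(j-1,i)$ imposed by the gluing. The two triangular regions together miss exactly one column of the matrix, so the total disk count is at most $L-1=\tfrac{m}{2}-1$ per unit degree, which is the sharp saving. (The same descent idea appears in the paper's fatgraph proof of the free-group case as ``every directed cycle in the turn graph contains a descending edge.'') You would also need either this two-sided bookkeeping or the paper's reduction to finitely many factors plus induction to handle $g$ whose syllables do not alternate between exactly two factors. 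Without the ascent/descent count, your argument proves only $-\chi(S)\ge 0$, not $-\chi(S)\ge n(S)$.
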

	This statement must be modified when the factors have torsions. For instance, we have a lower bound $1/2-1/N$ if every nontrivial element in each factor group has order at least $N\ge2$; for details, see Theorem \ref{refine}. The same statement and proof are still valid if the assumption $g\in [G,G]$ is weakened to $g^n\in[G,G]$ for some $n\ge1$ since we use the geometric interpretation (See Section \ref{sec2}).
	
	A special case is when each $G_\lambda=\mathbb{Z}$. In this case $G$ is free, and no $g\in[G,G]\backslash\{id\}$ is conjugate into a factor. Thus we obtain a new proof of
	\begin{corx}\label{free}
		If $F$ is free, and $g\in[F,F]\backslash\{id\}$, then $$\scl(g)\ge1/2.$$
	\end{corx}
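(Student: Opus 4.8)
The plan is to obtain Corollary~\ref{free} as an immediate instance of Theorem~\ref{fprod}. First I would write the free group $F$ as a free product $F = *_{\lambda\in\Lambda}\langle x_\lambda\rangle$ over a free basis $\{x_\lambda\}_{\lambda\in\Lambda}$, so that every factor is infinite cyclic and in particular torsion-free; this places us in the setting of Theorem~\ref{fprod} with $G_\lambda = \langle x_\lambda\rangle \cong \mathbb{Z}$. If $F$ has rank at most $1$ then $[F,F] = \{id\}$ and there is nothing to prove, so I may assume the rank is at least $2$ and the free product decomposition is genuine.

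The one hypothesis of Theorem~\ref{fprod} that still needs checking is that a nontrivial $g\in[F,F]$ is never conjugate into a factor $\langle x_\lambda\rangle$, and I would verify this by abelianization. If $g = h x_\lambda^{k} h^{-1}$ for some $h\in F$ and $k\in\mathbb{Z}$, then in the abelianization $F^{\mathrm{ab}} \cong \bigoplus_{\lambda\in\Lambda}\mathbb{Z}$ the class of $g$ equals $k$ times the standard generator indexed by $\lambda$; but $g\in[F,F]$ forces this class to vanish, so $k=0$ and hence $g = id$, contrary to hypothesis. Therefore every $g\in[F,F]\backslash\{id\}$ automatically satisfies the ``not conjugate into any $G_\lambda$'' condition.

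With the torsion-freeness of the factors and the non-conjugacy condition both in hand, Theorem~\ref{fprod} applies directly and yields $\scl_F(g)\ge 1/2$, which is precisely Corollary~\ref{free}. I do not expect any real obstacle here: the entire weight of the argument rests on Theorem~\ref{fprod}, and the only point specific to this corollary is the short abelianization observation above. (If one preferred a self-contained treatment, one could instead specialize the geometric argument of Section~\ref{sec2} to the free case, but deducing the statement from Theorem~\ref{fprod} is by far the cleanest route.)
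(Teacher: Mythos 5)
Your proposal is correct, but it is not the proof the paper gives for this corollary in Section~\ref{sec2}; it is instead the deduction the paper sketches in the introduction (``A special case is when each $G_\lambda=\mathbb{Z}$\dots''). The author explicitly gives \emph{two logically independent proofs} of Corollary~\ref{free}, and the one attached to the statement is a direct, self-contained fatgraph argument: compress an admissible surface for $g$ to a fatgraph $Y$ with fattening $S(Y)$, pull back the vertex labels of $C_{|g|}$ to $\partial S(Y)$, form a directed ``turn''-type graph on $\{1,\dots,|g|\}$ from paired boundary edges, decompose it into directed cycles (one per vertex of $Y$), and use the absence of self-loops (cyclic reducedness of $g$) to show every cycle contains a descending edge; counting descending edges gives $\#(\text{vertices of }Y)\le n(|g|/2-1)$ and hence $-\chi(S)\ge n$. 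Your route — writing $F=*_\lambda\langle x_\lambda\rangle$, checking torsion-freeness, and ruling out conjugacy into a factor by abelianization — is sound (the abelianization step is exactly right, and the rank~$\le 1$ case is correctly dispatched), and it is not circular, since Theorem~\ref{fprod} is proved via Theorem~\ref{refine} and Lemma~\ref{compscl} without reference to Corollary~\ref{free}. The trade-off: your argument is shorter but rests entirely on the free-product machinery of Section~\ref{sec3} (disk vectors, the linear-programming bound imported from \cite{My}), whereas the paper's Section~\ref{sec2} proof is elementary, stays entirely within free groups, and is the one the author hopes to extend to integral chains (Conjecture~\ref{genspec}), which is a stated motivation for including it.
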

	Duncan--Howie \cite{DH} proved Theorem \ref{fprod} when $G_\lambda$ are locally indicable. Our proof is new even in that case.
	
	A group $G$ with the property that either $\scl(g)=0$ or $\scl(g)\ge C$ for some $C=C(G)>0$ for all $g\in[G,G]$ is said to have a \emph{spectral gap} $C$ for $\scl$. Residually free groups have spectral gap $1/2$ (\cite{DSCL} Corollary 4.113 using Duncan--Howie's result); $\delta$-hyperbolic groups have a spectral gap that can be estimated by the number of generators and $\delta$ (Calegari--Fujiwara \cite{CF}); finite index subgroups of mapping class groups also have a spectral gap (Bestvina--Bromberg--Fujiwara \cite{BBF}); Baumslag--Solitar groups have a spectral gap $1/12$ (Clay--Forester--Louwsma \cite{CFL}); Right angled Artin groups have a spectral gap $1/24$ (Fern\'os--Forester--Tao \cite{FFT}).
	
	Our results imply that $*G_i$ has a spectral gap $\min\{C,1/2-1/N\}$ ($N\ge3$) if all $G_i$ have spectral gap $C$ and contains no ($N-1$)-torsion. Without the assumption on torsions, the spectral gap $\min\{C,1/12\}$ has been obtained in \cite{CFL}, Theorem 6.9.
	
	In fact we give two logically independent proofs of Corollary \ref{free}.
	
	Ivanov--Klyachko \cite{IK} recently independently obtained Theorem \ref{fprod} together with other estimates related to Theorem \ref{refine} in terms of commutator length. Their argument uses a different language (diagrams) but the idea behind is similar, especially in the case of free groups. Corollary \ref{clversion} provides the connection.
	
	\subsection{Contents of paper}Section \ref{sec2} introduces the geometric language of fatgraphs, used to study $\scl$ in free groups. We give a new proof of Corollary \ref{free} and discuss potential generalizations to integral chains.
	
	Section \ref{sec3} introduces some techniques to study surface maps into a wedge of spaces. We use these techniques to prove Theorem \ref{refine} which allows the factors to have torsion, then we deduce Theorem \ref{fprod}.
	\subsection{Acknowledgments}
	
	The author would like to thank his advisor Danny \linebreak Calegari, Sergei Ivanov, Anton Klyachko, Alden Walker and the anonymous referee for reading earlier versions of this paper and giving great suggestions.
	
	\section{Geometric Definition of scl}\label{sec2}
	Our arguments are geometric, and depend on an interpretation of $\scl$ in terms of maps of surfaces.
	
	Let $G$ be a group. Let $X$ be a $K(G,1)$. Each conjugacy class $g\in[G,G]$ corresponds to a free homotopy class of loop $\gamma:S^1\to X$.
	
	An \emph{admissible} surface for $g$ is a compact, oriented surface $R$ without disk or sphere components, together with a free homotopy class of map $f:R\to X$ for which the following diagram commutes
	\begin{center}
		\begin{tikzcd}
			\partial R \arrow{r}{\partial f} \arrow[hookrightarrow]{d}
			&S^1\arrow{d}{\gamma}\\
			R\arrow{r}{f}
			& X 
		\end{tikzcd}
	\end{center}
	and $\partial f$ is a positively oriented (possibly disconnected) covering (of degree $n(R)$).
	
	These are sometimes called \emph{monotone} admissible surfaces (\cite{DSCL}, Definition 2.12).
	\begin{lemma}[\cite{DSCL}, Proposition 2.10 and 2.13]\label{geomdef}
		$$\scl(g)=\inf\frac{-\chi(R)}{2n(R)}$$ over all admissible surfaces for $g$.
	\end{lemma}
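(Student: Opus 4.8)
The plan is to prove the two inequalities in $\scl(g)=\inf\frac{-\chi(R)}{2n(R)}$ separately, following the argument for Propositions 2.10 and 2.13 of \cite{DSCL}. For the direction $\inf\frac{-\chi(R)}{2n(R)}\le\scl(g)$ I would start from a near-optimal decomposition $g^n=\prod_{i=1}^{k}[c_i,d_i]$ with $k=\cl(g^n)$, realize the once-punctured genus-$k$ surface $\Sigma_{k,1}$ (whose boundary reads $\prod_{i}[\alpha_i,\beta_i]$ in $\pi_1\Sigma_{k,1}$) by a map $f\colon\Sigma_{k,1}\to X$ with $\alpha_i\mapsto c_i$ and $\beta_i\mapsto d_i$, and then homotope $f$ so that $\partial f$ is an honest degree-$n$ covering of $\gamma$ (using that $f|_{\partial\Sigma_{k,1}}$ is freely homotopic to $\gamma^n$ and extending that homotopy over a collar). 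This exhibits a monotone admissible surface with $n(\Sigma_{k,1})=n$ and $-\chi=2k-1$, so $\inf\frac{-\chi(R)}{2n(R)}\le\frac{2\cl(g^n)-1}{2n}<\frac{\cl(g^n)}{n}\to\scl(g)$.

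For the reverse direction, let $R$ be admissible for $g$. Since $\frac{-\chi}{2n}$ of a disjoint union is a weighted mediant of the ratios of the pieces, it suffices to treat $R$ connected with nonempty boundary, of genus $h$ with boundary circles wrapping $\gamma$ with degrees $d_1,\dots,d_b$, $\sum_i d_i=n=n(R)$. Pushing the defining relation of $\pi_1R$ forward by $f_*$ gives $\prod_{j=1}^{h}[A_j,B_j]=\prod_{i=1}^{b}C_ig^{d_i}C_i^{-1}$ in $G$; rewriting each factor by $Cg^{d}C^{-1}=g^{d}[g^{-d},C]$ and sliding all powers of $g$ to the left (which conjugates each commutator $[g^{-d_i},\cdot]$ by a power of $g$, leaving it a commutator of the same form since $g^{-d_i}$ is central in $\langle g\rangle$) yields $g^n=\big(\prod_{j=1}^{h}[A_j,B_j]\big)\big(\prod_{i=1}^{b-1}[g^{-d_i},C'_i]\big)^{-1}$, so $\cl(g^n)\le h+b-1$. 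By itself this only gives $\scl(g)\le\frac{h+b-1}{n}$, which is weaker than $\frac{-\chi(R)}{2n(R)}=\frac{2h+b-2}{2n}$; the gap is closed by a dilation argument.

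For the dilation I would choose, for infinitely many $N$, a degree-$N$ cover $\hat R_N\to R$ in which the preimage of every boundary component of $R$ is connected. Then $\hat R_N$ is admissible, $n(\hat R_N)=Nn$, it has $b$ boundary circles, and $-\chi(\hat R_N)=N(-\chi(R))$, so its genus $\hat h_N$ satisfies $2-2\hat h_N-b=N\chi(R)$; applying the previous step to $\hat R_N$ gives $\cl(g^{Nn})\le\hat h_N+b-1=N\bigl(h+\tfrac{b}{2}-1\bigr)+\tfrac{b}{2}$, and hence $\scl(g)\le\lim_{N\to\infty}\frac{\cl(g^{Nn})}{Nn}\le\frac{h+b/2-1}{n}=\frac{-\chi(R)}{2n(R)}$. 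Taking the infimum over all admissible $R$ finishes the proof. The main obstacle — and the only genuinely nontrivial point — is producing these covers: one needs a transitive action of the free group $\pi_1R$ on $N$ points in which each boundary word $\beta_i$ acts as an $N$-cycle. When $b=1$ the element $\beta_1$ lies in $[\pi_1R,\pi_1R]$, so its image must be an $N$-cycle inside $A_N$ (forcing $N$ odd), and one builds the action using that a long cycle in $A_N$ is a short product of commutators; the general case (including the planar case $h=0$, where the $\beta_i$ must be realized simultaneously as $N$-cycles with product $1$) is handled by explicit constructions valid for all $N$ coprime to a fixed integer. The reduction to monotone admissible surfaces and the mediant inequality are routine.
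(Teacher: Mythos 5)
This lemma is quoted in the paper directly from \cite{DSCL} (Propositions 2.10 and 2.13) with no proof given, so there is no internal argument to compare against; your write-up is a correct reconstruction of the standard proof from that reference. Both directions are sound: the once-punctured genus-$\cl(g^n)$ surface gives $\inf\frac{-\chi}{2n}\le\frac{2\cl(g^n)-1}{2n}$, and for the reverse direction the count $\cl(g^n)\le h+b-1$ is right provided you first conjugate the surface-group relation so that one of the conjugators $C_i$ is trivial (which is what lets you stop at $b-1$ commutators rather than $b$), after which the covering-space dilation exactly absorbs the $b/2$ discrepancy between $h+b-1$ and $\frac{-\chi}{2}=h+\frac{b}{2}-1$. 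The only genuinely delicate point is the existence, for infinitely many $N$, of degree-$N$ covers with connected boundary preimages, and your treatment is adequate: for $b\ge2$ send the handle generators to the identity and the free boundary generators to a fixed $N$-cycle $\sigma$, so the remaining boundary word maps to $\sigma^{-(b-1)}$, an $N$-cycle whenever $\gcd(N,b-1)=1$; for $b=1$ use that for odd $N$ a long cycle lies in $A_N$ and is a single commutator (Ore), with transitivity automatic since the image contains an $N$-cycle.
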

	This also defines $\scl(g)$ for $g\in G$ with $g^n\in [G,G]$ for some $n\ge1$.
	
	The following corollary is well-known to people studying scl, we include it for readers interested in commutator length.
	\begin{cor}\label{clscl}
		Let $g_i$ be conjugates of $g$. Unless $m=1$ and $g_1^{n_1}=id$, we have
		$$\cl(g_1^{n_1}\ldots g_m^{n_m})\ge \scl(g)\left|\sum_{i=1}^{m} n_i\right|-\frac{m}{2}+1.$$
	\end{cor}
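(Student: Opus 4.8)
The plan is to pass from a cheapest surface bounding $w:=g_1^{n_1}\cdots g_m^{n_m}$ to a monotone admissible surface for the $1$-chain $g_1^{n_1}+\cdots+g_m^{n_m}$, and then to compare $\scl$ of that chain with $\scl(g)$. We may assume $w\neq \mathrm{id}$: if $w=\mathrm{id}$ then $\cl(w)=0$, while the chain $g_1^{n_1}+\cdots+g_m^{n_m}$ is a boundary and so vanishes in $B_1^H(G)$, whence $\abs{\sum_i n_i}\,\scl(g)=0$ and the right-hand side is $-m/2+1\le 0$ once $m\ge 2$, the case $m=1,\ g_1^{n_1}=\mathrm{id}$ being the one excluded. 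So let $n:=\cl(w)\ge 1$, fix $X=K(G,1)$ and a loop $\gamma$ representing $g$, and write $g_i=c_igc_i^{-1}$. By definition of commutator length there is a map $f\colon\Sigma\to X$, where $\Sigma$ is a compact oriented surface of genus $n$ with one boundary circle and $f|_{\partial\Sigma}$ represents $w$; here $-\chi(\Sigma)=2n-1$.

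Next I would glue on a planar piece that ``unrolls'' the boundary. Let $Y$ be a $2$-sphere with $m+1$ open disks removed, so $\chi(Y)=1-m$, and $\pi_1(Y)$ is free on its $m$ inner boundary loops $Z_1,\dots,Z_m$ while the outer boundary loop $Z_0$ equals $Z_1\cdots Z_m$. Choose a map $Y\to X$ realizing $Z_i\mapsto g_i^{n_i}$ on $\pi_1$; then $Z_0\mapsto w$. After homotoping so the two maps agree on $Z_0$ and on $\partial\Sigma$, glue to form $R:=\Sigma\cup_{Z_0}Y$ with its induced map to $X$. Then $-\chi(R)=(2n-1)+(m-1)=2n+m-2$, and $\partial R$ is the disjoint union $Z_1\sqcup\cdots\sqcup Z_m$, with $Z_i$ sent to a loop freely homotopic to $\gamma^{n_i}$ (since $g_i$ is conjugate to $g$). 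As $n\ge 1$ we have $\chi(R)<0$, so $R$ is connected with no disk or sphere component, and after straightening the boundary it is a monotone admissible surface of degree $1$ for the chain $g_1^{n_1}+\cdots+g_m^{n_m}$.

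Finally, the chain version of Lemma~\ref{geomdef} (\cite{DSCL}) gives $\scl\!\left(g_1^{n_1}+\cdots+g_m^{n_m}\right)\le -\chi(R)/2=n+\tfrac m2-1$. On the other hand $\scl$ descends to $B_1^H(G)$, where, by homogeneity and conjugation-invariance, $g_i^{n_i}=n_ig_i=n_ig$, so $g_1^{n_1}+\cdots+g_m^{n_m}=\left(\sum_i n_i\right)g$ and therefore $\scl\!\left(g_1^{n_1}+\cdots+g_m^{n_m}\right)=\abs{\sum_i n_i}\,\scl(g)$. Combining the two inequalities gives $\cl(w)=n\ge \abs{\sum_i n_i}\,\scl(g)-\tfrac m2+1$, which is the claim. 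The one input that is not elementary is that $\scl$ is computed by surfaces for chains and factors through $B_1^H(G)$, both standard from \cite{DSCL}; the remaining steps — constructing $Y\to X$ from a free group, the gluing, and checking $R$ is admissible — are routine. (If one prefers to stay purely geometric: when all $n_i>0$ the surface $R$ is already a monotone admissible surface for $g$ of degree $\sum_i n_i$, which yields the bound directly; it is only non-positive exponents that make the passage to the chain convenient.)
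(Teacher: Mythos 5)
Your proof is correct and follows essentially the same route as the paper: build from a genus-$\cl(w)$ surface a surface $R$ with $m$ boundary components wrapping $n_i$ times around $\gamma$, compute $-\chi(R)=2\cl(w)+m-2$, and invoke the geometric characterization of $\scl$ (the chain/non-monotone version from \cite{DSCL} handling mixed signs). You merely make explicit the gluing of the $(m+1)$-holed sphere and the degenerate case $w=\mathrm{id}$, which the paper treats in one line.
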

	\begin{proof}
		If $g_1^{n_1}\ldots g_m^{n_m}=id$, then the inequality does not hold only when $m=1$. From now on, assume $g_1^{n_1}\ldots g_m^{n_m}\neq id$ and can be written as a product of $k$ commutators, then we obtain a surface $R$ of genus $k$ with $m$ boundary components and a map $f:R\to X$ such that the boundary components wrap $n_i$ times respectively around a loop $\gamma$ representing the conjugacy class $g$. Then $-\chi(R)=2k+m-2$ and $R$ is not a disk by assumption. If all $n_i$ have the same sign, the inequality follows from Lemma \ref{geomdef}; for the general case, apply Proposition 2.10 in \cite{DSCL} instead.
	\end{proof}

	If $G$ is free, we can take $X$ to be a wedge of circles. In this case, any admissible $S$ can be compressed (reducing $-\chi(S)$ without changing $n(R)$) until it is represented by a \emph{fatgraph} (see \cite{Cul}). See \cite{SSLP} or \cite{Fat} for an introduction to fatgraphs.
	
	Informally, a fatgraph is a graph $Y$ with a cyclic ordering of edges at each vertex, which lets $Y$ embed canonically as the spine of a compact oriented surface $S(Y)$ (the \emph{fattening}) which deformation retracts to $Y$. $\partial S(Y)$ has an induced simplicial structure.
	
	If $Y$ comes with a simplicial map $f:Y\to X$, then we get a surface map $\bar{f}:S(Y)\to X$, which is simplicial on $\partial S(Y)$, by pre-composing with the deformation retraction $S(Y)\to Y$. We decorate the oriented edges of $\partial S(Y)$ by generators of $F$ to indicate where they are mapped in $X$ by $\bar{f}$. See Figure \ref{fat} for an example.
	
	Any cyclically reduced $g$ in a free group $F$ is represented by a simplicial immersion $\phi: C_{|g|}\to X$, where $|g|$ is the word length, and $C_{|g|}$ is the simplicial oriented $S^1$ with $|g|$ vertices. For an admissible fatgraph $Y$ (with a simplicial map $f:Y\to X$), the oriented covering map $\partial \bar{f}:\partial S(Y)\to C_{|g|}$ can be taken to be simplicial. Every admissible surface (up to compression and homotopy) can be put in this form.
	
	\begin{figure}
		\begin{center}
			\resizebox{324pt}{148pt}{\includegraphics{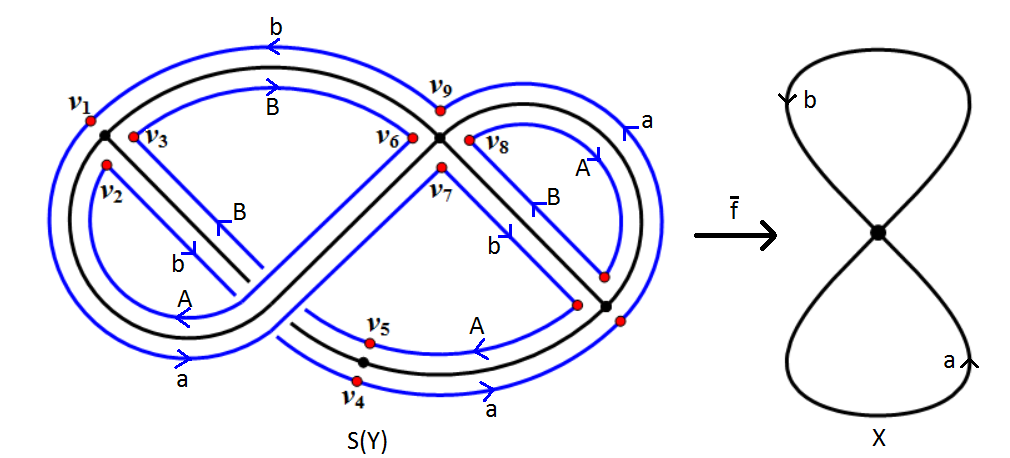}}
			\caption{Fattening and Decoration} \label{fat}
		\end{center}
	\end{figure}
	
	Now we prove
	\setcounter{theoremx}{1}
	\begin{corx}
		If $F$ is free, and $g\in[F,F]\backslash\{id\}$, then $$\scl(g)\ge1/2.$$
	\end{corx}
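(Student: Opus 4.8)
The plan is to combine Lemma~\ref{geomdef} with the fatgraph model just described. Given an admissible surface for $g$, compress it and put it in the form $\bar f\colon S(Y)\to X$ for an admissible fatgraph $Y$ with a simplicial map $f\colon Y\to X$, arranged so that each edge of $Y$ maps homeomorphically onto an edge of $X$; then $\partial\bar f\colon\partial S(Y)\to C_{\abs g}$ is a simplicial covering of some degree $n=n(Y)$. Since $S(Y)$ deformation retracts onto $Y$, $-\chi(S(Y))=-\chi(Y)=\abs{E(Y)}-\abs{V(Y)}$, and counting the edges of $\partial S(Y)$ in this covering gives $2\abs{E(Y)}=n\abs g$. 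So by Lemma~\ref{geomdef} the corollary is equivalent to the combinatorial inequality
\[
-\chi(Y)\ge n(Y),\qquad\text{i.e.}\qquad \abs{V(Y)}\le\tfrac12\,n(Y)\,(\abs g-2),
\]
to be proved for every admissible fatgraph $Y$ for $g$. We may take $g$ cyclically reduced; then $\abs g\ge 4$, and $\abs g$ is even since $g\in[F,F]$ has zero exponent sum in each generator.

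The engine of the proof is the decoration. Traversing $\partial S(Y)$ in the direction covering $\gamma$ positively, the two letters read just before and just after any corner of $Y$ form a length-$2$ cyclic subword of $g$, a so-called \emph{allowed turn}. Cyclic reducedness already forbids valence-$1$ vertices, whose only turn would read $ss^{-1}$. Encode the allowed turns in a finite directed graph $H$: its vertices are the letters occurring in $g$, with an edge $s\to t$ for each cyclic subword $st^{-1}$ of $g$. Then a valence-$d$ vertex of $Y$ reads off a closed walk of length $d$ in $H$ (its consecutive half-edge labels $s_1,\dots,s_d$ satisfy $s_js_{j+1}^{-1}\in H$ cyclically). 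The out-degree of $s$ in $H$ equals the number of occurrences of $s$ in $g$ and its in-degree the number of occurrences of $s^{-1}$; since $g\in[F,F]$ these coincide, so $H$ is balanced, hence a disjoint union of directed cycles, with exactly $\abs g$ edges in all.

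It follows that every vertex of $Y$ has valence at least $\operatorname{girth}(H)$. If $\operatorname{girth}(H)\ge 4$ then $4\abs{V(Y)}\le\sum_v d_v=2\abs{E(Y)}=n\abs g$, so $\abs{V(Y)}\le n\abs g/4\le\tfrac12 n(\abs g-2)$ because $\abs g\ge 4$, and we are done. This is exactly the extremal situation $\abs g=4$: then $g$ must be a commutator $[a,b^{\pm1}]$ and $H$ is a single directed $4$-cycle, which is the mechanism forcing the sharp constant $\tfrac12$. If $\operatorname{girth}(H)=3$ the same estimate yields $-\chi(Y)=n\abs g/2-\abs{V(Y)}\ge n\abs g/6\ge n$ as soon as $\abs g\ge 6$, and $\abs g=4$ has just been ruled out in that case.

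The hard case is $\operatorname{girth}(H)=2$: then $g$ contains a cyclic subword $st^{-1}$ together with its inverse $ts^{-1}$, and $Y$ may genuinely have valence-$2$ vertices, so the crude count fails. Here I would first suppress every valence-$2$ vertex (merging the two incident edges into one, now carrying a reduced length-$2$ subword of $g$); this leaves $-\chi(Y)$ unchanged and all valences $\ge 3$, at the cost of edges mapping to longer reduced paths in $X$. The remaining task — and what I expect to be the crux — is to show that the lengths absorbed into such edges, constrained by the $2$-cycles of $H$ and, crucially, by the fact that $g$ lies in $[F,F]$ (rather than merely being nontrivial, or not a proper power), still force $-\chi(Y)\ge n$; equivalently, that valence-$2$ and valence-$3$ vertices cannot be too numerous. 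This is precisely where the hypothesis $g\in[F,F]$ must do work that the Euler-characteristic bookkeeping alone does not; the alternative proof advertised in the introduction presumably sidesteps this case analysis.
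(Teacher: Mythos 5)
Your reduction to the combinatorial inequality $\abs{V(Y)}\le\frac{1}{2}n(\abs{g}-2)$ is exactly the paper's starting point, and the idea that each vertex of $Y$ traces a closed walk in a turn graph is the right germ. But the proposal contains a genuine gap: the girth-$2$ case, which you explicitly leave unresolved, is not an exotic corner but the generic situation. It occurs whenever some length-$2$ cyclic subword of $g$ appears together with its inverse --- already for $g=[a,b][a^{-1},b^{-1}]=abABABab$, where $ab$ and $BA$ are both cyclic subwords, so your $H$ has a $2$-cycle. Suppressing valence-$2$ vertices destroys the turn structure you are counting with (edges now carry longer subwords, and the new ``turns'' are no longer governed by $H$), and you offer no replacement estimate; your own text concedes that the crux is missing. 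As written, the argument proves the corollary only for the special words whose letter-turn-graph has girth at least $3$. (A smaller inaccuracy: a balanced digraph is an \emph{edge}-disjoint union of directed cycles, not a vertex-disjoint one, though this does not affect the girth bound on valences.)

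The paper closes exactly this gap by two changes you did not make. First, its turn graph $G$ has vertex set the \emph{positions} $\{1,\dots,\abs{g}\}$ of $C_{\abs{g}}$, not the letters; cyclic reducedness then says only that $G$ has no self-loops (paired vertices have distinct labels), which is much weaker than a girth bound but is all that is needed. Second, and crucially, it imposes a \emph{linear} order on the positions by cutting the cyclic order at $\abs{g}$, and calls an edge from $i$ to $j$ descending if $i>j$. Since there are no self-loops, the directed cycle of length $d(v)$ traced by each vertex $v$ of $Y$ must contain at least one descending edge, so $\abs{V(Y)}$ is bounded by the total number of descending edges; and a direct count shows each of the $n\abs{g}/2$ edge-pairings of $\partial S(Y)$ contributes exactly one descending edge unless it involves the cut position $\abs{g}$, and exactly $n$ pairings do, giving exactly $n(\abs{g}/2-1)$ descending edges. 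This yields the bound in one stroke, with no case analysis on valences or girth: the work in your hard case is done by the choice of linear order, not by extra algebraic input from $g\in[F,F]$ (which enters only through the existence of the admissible surface). If you want to salvage your approach, replace the letter graph $H$ by the position graph and look for an ordering of positions making every cycle descend somewhere --- that is the paper's proof.
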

	\begin{proof}
	Take any fatgraph $Y$ with fattening $S=S(Y)$ admissible of degree $n$ for $g$. Label the vertices of $C_{|g|}$ cyclically as $1,2,\ldots,|g|$. Pull back the labels to $\partial S$ via the covering map $\partial\bar{f}$, then each edge on $\partial S$ also gets labeled as $(i,i+1)$ for $i<|g|$ or $(|g|,1)$.
		
	Two (distinct) edges of $\partial S$ are \emph{paired} if they are mapped to the same edge of $Y$ under the deformation retraction. Two (distinct) vertices of $\partial S$ are \emph{paired} if they are end points of a pair of paired edges that correspond to some edge $e$ in $Y$ and these two vertices correspond to the same end point of $e$. In Figure \ref{fat}, $v_4$ and $v_5$ are paired vertices; $v_1$,$v_2$ and $v_3$ are mutually paired; $v_6$ is paired with $v_7$ and $v_9$ but not with $v_8$.
	
	\begin{claim}\label{noloop}
		Paired vertices have distinct labels.
	\end{claim}
	\begin{proof}
		Suppose not, then we will have two paired edges mapped to two consecutive edges of $C_{|g|}$ under $\partial\bar{f}$. But paired edges are decorated by inverse letters, so the cyclic word decorating $\partial S$ is not cyclically reduced, contrary to the assumption.
	\end{proof}
				
	Now construct a directed graph $G$ (possibly with multi-edge) as follows. The vertex set is $\{1,2,\ldots,|g|\}$. Whenever we have a pair of paired edges on $\partial S$ labeled as $(i,i+1)$ and $(j,j+1)$ respectively, add a directed edge from $i+1$ to $j$ and another from $j+1$ to $i$. We say a directed edge from $i$ to $j$ is \emph{descending} if $i>j$. See Figure \ref{Gexample} for an example. The graph $G$ resembles the turn graph introduced by Brady--Clay--Forester \cite{BCF} to compute scl in free groups.
	
	\begin{figure}[H]
		\begin{center}
			\resizebox{331pt}{130pt}{\includegraphics{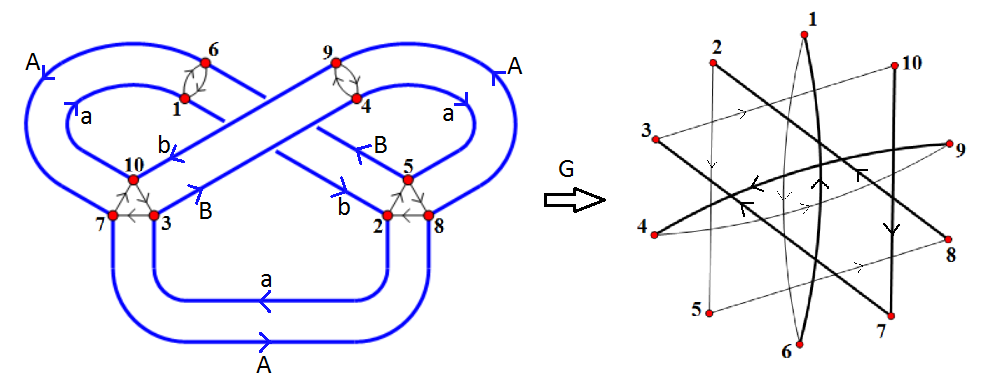}}
			\caption{The pull back label on a fatgraph that is admissible for $g=abaBaBAAAb\in F_2$, and the corresponding graph $G$ with descending edges thickened.} \label{Gexample}
		\end{center}
	\end{figure}
	
	For each vertex $v$ of $Y$, let $d(v)$ denote the valence.
	\begin{claim}
		$$\sum d(v)=n|g|\text{, where the sum is over all vertices of }Y.$$
	\end{claim}
	\begin{proof}
		For each vertex $v$ of $Y$ with valence $d(v)$, there are exactly $d(v)$ vertices on $\partial S$ that deformation retract to $v$. Thus $\sum d(v)$ is the number of vertices of $\partial S$, which equals the number of edges on $\partial S$, which is $n|g|$.
	\end{proof}
	
	\begin{claim}
		$$\#(\text{vertices in }Y)\le n\left(\frac{1}{2}|g|-1\right).$$
	\end{claim}
	\begin{proof}
		In the proof above, we see that there are exactly $d(v)$ vertices on $\partial S$ that deformation retract to $v$. These vertices contribute to exactly $d(v)$ directed edges in $G$ which form a directed cycle. This gives a decomposition of $G$ into cycles as $v$ ranges over all vertices of $Y$. Note that each directed cycle in $G$ must contain a descending edge since there is no self loop according to Claim \ref{noloop}. Therefore, the number of vertices in $Y$ is no more than the number of descending edges in $G$.
				
		On the other hand, for each pair of paired edges on $\partial S$ labeled as $(i,i+1)$ and $(j,j+1)$ respectively, if neither of $i,j$ is $|g|$, then exactly one of the two directed edges in $G$ contributed by this pair is descending; if $i=|g|$ or $j=|g|$, then neither of the two directed edges is descending. Thus the number of descending edges in $G$ is $n(|g|/2-1)$.
	\end{proof}
		
	Combining the results above, we have
	\begin{eqnarray*}
		-\chi(S)=-\chi(Y)=\sum\frac{d(v)-2}{2}&=&\frac{1}{2}\sum d(v)-\#(\text{vertices of }Y)\\
		&\ge&\frac{1}{2}n|g|-n\left(\frac{1}{2}|g|-1\right)\\
		&=&n
	\end{eqnarray*} 
	for all admissible fatgraphs, which implies $\scl(g)\ge 1/2$.
	\end{proof}
	
	$\scl$ is extended to integral chains (formal sums of elements) in \cite{DSCL}.
	\begin{conj}[Calegari]\label{genspec}
		$$\scl(c)\ge\frac{1}{2}$$ for any integral chain $c$ in a free group, unless an admissible surface of $c$ is annuli (in which case $\scl(c)=0$).
	\end{conj}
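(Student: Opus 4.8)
The natural plan is to push the fatgraph argument behind Corollary \ref{free} through for chains. Clearing denominators and replacing $\gamma_i$ by $\gamma_i^{-1}$ wherever its coefficient is negative, we may assume $c=\gamma_1+\cdots+\gamma_k$ for (not necessarily distinct) nontrivial cyclically reduced $\gamma_i\in F$ with $\sum_i[\gamma_i]=0$ in $H_1(F)$. Given any admissible surface $R$ of degree $n$ for $c$, compress it to a fatgraph $Y$ with fattening $S=S(Y)$, so that $\partial S$ carries an oriented simplicial cover of $C:=\bigsqcup_i C_{|\gamma_i|}$ with each $C_{|\gamma_i|}$ covered $n$ times. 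Pull back the vertex labels to $\partial S$, fix a total order on the disjoint union $V=\bigsqcup_i\{1,\dots,|\gamma_i|\}$ of vertex sets — say by stacking the factors, $C_{|\gamma_1|}<C_{|\gamma_2|}<\cdots$ — and build the directed graph $G$ on $V$ exactly as in the proof of Corollary \ref{free}: each pair of paired edges labelled $(i,i+1)$ and $(j,j+1)$ contributes $i+1\to j$ and $j+1\to i$, and an edge $p\to q$ is descending if $p>q$. The decomposition of $G$ into directed cycles, one per vertex of $Y$, and the identity $-\chi(S)=\tfrac12\sum_v d(v)-\#\{\text{vertices of }Y\}$ go through verbatim, and since every directed cycle still contains a descending edge (Claim \ref{noloop} rules out self-loops), one obtains
\[
-\chi(S)\ \ge\ \#\{\text{pairs of paired edges that produce no descending edge}\}.
\]

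The next step is to observe that such a pair occurs precisely when both of its edges lie in a single factor $C_{|\gamma_i|}$ and one of them is a ``wrap-around'' edge (label $(|\gamma_i|,1)$): with the stacked order, every cross-factor pair produces exactly one descending edge. Claim \ref{noloop} prevents two wrap-around edges of the \emph{same} factor from being paired, so each factor contributes at most $n$ such pairs, and when every factor attains this bound — i.e.\ every wrap-around edge is paired within its own factor — that surface satisfies $\frac{-\chi(S)}{2n}\ge\tfrac k2$; the conjecture for such $c$ would follow if this favourable configuration could always be arranged, over all admissible surfaces. The remaining, and genuinely hard, case is when wrap-around edges of some $C_{|\gamma_i|}$ are paired with edges of another $C_{|\gamma_j|}$, pushing the right-hand count below $n$; this is exactly what an annulus joining a loop over $\gamma_i$ to a loop over $\gamma_j$ produces, i.e.\ it happens when $\gamma_i$ and $\gamma_j$ share a common root. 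One wants to argue that, after an optimal choice of the cut points of the $\gamma_i$ and of the order on $V$, every such ``bad'' pairing can be accounted for by a genuine annular subsurface, so that either the inequality survives or $c$ admits an annular admissible surface and $\scl(c)=0$.

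Making that trade-off precise is, I expect, the main obstacle, and presumably why Conjecture \ref{genspec} is still open: the cut points and the order on $V$ must be chosen uniformly over \emph{all} admissible surfaces at once, whereas whether $c$ bounds annuli is a global property of the chain rather than of a single surface, and controlling the interaction between the two is delicate. A possibly cleaner route avoids surfaces altogether: by Lemma \ref{geomdef}, $\scl(c)\ge\tfrac12$ is equivalent to the assertion that the linear program computing $\scl(c)$ over the Brady--Clay--Forester turn graph has optimum at least $\tfrac12$, which by LP duality reduces to producing an explicit dual certificate — a suitably normalized ``rotation''-type weighting of the turn-graph edges — valid for every chain outside the annular locus. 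Either way the obstruction is of a different flavour from the torsion treated in Section \ref{sec3}: it stems from roots shared among the $\gamma_i$, not from torsion in a factor group.
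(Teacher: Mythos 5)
There is nothing in the paper to compare your argument against: the statement you were given is stated as Conjecture \ref{genspec} and is explicitly left open. The paper offers no proof, only the remark that the ordering argument from Corollary \ref{free} does give \emph{some} lower bound for chains from any choice of vertex ordering, but that it seems difficult to exhibit orderings achieving $1/2$ in general, plus a mention of computer evidence. Your proposal is therefore correctly calibrated: you do not claim a proof, and the obstruction you isolate --- that the bound $-\chi(S)\ge\#\{\text{pairs producing no descending edge}\}$ degenerates exactly when wrap-around edges of one $C_{|\gamma_i|}$ pair into a different factor, and that no single choice of cut points and of ordering on $\bigsqcup_i\{1,\dots,|\gamma_i|\}$ obviously works for all admissible surfaces simultaneously --- is precisely the difficulty the paper's own remark points to. Your bookkeeping for the stacked order (cross-factor pairs always yield exactly one descending edge; zero-descending pairs are intra-factor pairs involving a wrap-around edge; Claim \ref{noloop} forbids pairing two wrap-around edges of the same factor) is correct.

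Two small points. First, the conjectured bound is $\scl(c)\ge 1/2$, not $k/2$, so in the favourable case you only need $n$ zero-descending pairs in total, not $n$ per factor; your target is stronger than required, though the failure mode is the same. Second, your normalizations are legitimate but worth justifying: an integral chain has no denominators to clear (you presumably mean splitting $m\gamma$ into $m$ summands), and replacing $-\gamma$ by $\gamma^{-1}$ uses that $\gamma^{-1}+\gamma$ is in the kernel of $\scl$ on $B_1^H$. Neither issue affects your diagnosis; the conjecture remains open, and your proposal should be read as an accurate account of why, not as a proof.
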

	\begin{remark}
		Using the argument above, any ordering of the vertices on $C_{|g|}$ gives a lower bound on $\scl(g)$, which also works for chains. However, few orderings provide the correct lower bound $1/2$, and it seems difficult to show such good orderings exist for general chains. Computer experiments give evidence for Conjecture \ref{genspec}.
	\end{remark}
	\begin{remark}
		Duncan--Howie's proof depends on the existence of a left-ordering on torsion-free one-relator quotients of the free groups. There is no analogy of their argument for integral chains. Thus one motivation of our work is to find a new proof of their result which does not depend on orderability.
	\end{remark}
	\begin{remark}
		Spectral gap $1/2$ is often useful to certify \emph{extremal} surfaces (those admissible surfaces realizing the infimum in the geometric interpretation). For example, Corollary \ref{free} implies that the once-punctured torus bounding $[x,y]$ is extremal when $x,y\in F$ do not commute. Similarly, the special case $c=x^{-1}+y^{-1}+xy$ in Conjecture \ref{genspec} is asking whether the thrice-punctured sphere bounding $c$ is extremal when $x,y$ do not commute, which is still open.
	\end{remark}
	
	\section{Free product case}\label{sec3}
	In this section we prove Theorem \ref{fprod}. This will follow by induction and a finiteness argument from:
	\begin{theorem}\label{refine}
		Let $G=A*B$ and $g=a_1b_1\cdots a_Lb_L$ with $a_i\in A\backslash\{id\}$, $b_i\in B\backslash\{id\}$ and $L\ge1$ such that $g\in[G,G]$. Let $N\ge2$ be the minimal order of $a_i$ and $b_i$, then $$\scl_G(g)\ge1/2-1/N.$$
	\end{theorem}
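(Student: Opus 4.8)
The plan is to use the geometric interpretation (Lemma~\ref{geomdef}): take $X=X_A\vee X_B$ where $X_A,X_B$ are $K(A,1)$ and $K(B,1)$ glued at a point $*$, let $\gamma\colon S^1\to X$ represent $g$, and show $-\chi(R)\ge(1-2/N)\,n(R)$ for every admissible surface $f\colon R\to X$. First I would homotope $f$ into normal form so that $\Gamma\defeq f^{-1}(*)$ is a properly embedded $1$--manifold transverse to $\partial R$; innermost circle components of $\Gamma$ can be removed by isotopy/surgery, so assume $\Gamma$ is a disjoint union of arcs. Cutting along $\Gamma$ decomposes $R=R_A\cup_\Gamma R_B$ with $R_A\to X_A$ and $R_B\to X_B$; we may also assume $R$ is connected.

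Next comes the bookkeeping. The loop $\gamma$ has exactly $2L$ points mapping to $*$, so $\partial R$ has $2Ln$ of them (with $n=n(R)$), and each is an endpoint of exactly one arc of $\Gamma$; hence $\Gamma$ has $m=Ln$ arcs. By inclusion--exclusion $\chi(R)=\chi(R_A)+\chi(R_B)-\chi(\Gamma)$, so $-\chi(R)=-\chi(R_A)-\chi(R_B)+Ln$. Each boundary circle of $R_A$ is a cyclic alternation of arcs of $\partial R$ mapping into $X_A$ (``$A$--arcs'', copies of the syllables $a_i$) with arcs of $\Gamma$; there are $Ln$ $A$--arcs total, distributed among the components of $R_A$, and symmetrically $Ln$ $B$--arcs among the components of $R_B$.

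The heart of the matter is a lower bound for $-\chi(R_A)$, and symmetrically for $-\chi(R_B)$. After compressing, assume every component of $R_A$ is incompressible in $X_A$; since $R$ (hence $R_A$) has no sphere components, the only components of positive Euler characteristic are disks, and a disk component $D$ has boundary reading a word $w_D$ in the syllables that is trivial in $A$. The key claim is that, after a further tightening of $(R,\Gamma)$ ensuring that no two consecutive $A$--arcs of a disk component bound a trivial bigon with the arc of $\Gamma$ between them, such a disk $D$ carries at least $N$ $A$--arcs on its boundary, because each syllable appearing has order $\ge N$. Granting this, $R_A$ has at most $Ln/N$ disk components, so $\chi(R_A)\le Ln/N$, likewise $\chi(R_B)\le Ln/N$, and therefore
\[
-\chi(R)\ \ge\ Ln-\frac{2Ln}{N}\ =\ Ln\!\left(1-\frac{2}{N}\right)\ \ge\ n\!\left(1-\frac{2}{N}\right),
\]
using $L\ge1$ and $N\ge2$; dividing by $2n$ gives $\scl_G(g)\ge 1/2-1/N$. (When $A,B$ are torsion-free one takes $N=\infty$: no disk components survive, recovering the bound $1/2$.)

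The step I expect to be the main obstacle is making the tightening precise, i.e. showing that a minimal-complexity admissible surface has no disk component of $R_A$ or $R_B$ with fewer than $N$ boundary arcs: unlike in the free case, a surface map into a wedge need not be simplifiable by naive compressions, since the offending arcs of $\partial R$ are constrained to cover $\gamma$ and cannot simply be pushed off, so one must run a minimality/exchange argument on the pair $(R,\Gamma)$ — and in the degenerate situations where distinct syllables coincide or are mutually inverse in $A$, one likely needs in addition the homological constraint $\sum_i a_i=0$ in $A^{\mathrm{ab}}$ to complete the count. The passage from Theorem~\ref{refine} to Theorem~\ref{fprod} is then a softer matter: induction on the number of free factors together with the stated finiteness argument.
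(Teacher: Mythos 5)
Your setup (the decomposition $R=R_A\cup_\Gamma R_B$ along $\Gamma=f^{-1}(*)$, the count $-\chi(R)=-\chi(R_A)-\chi(R_B)+Ln$, and the plan to bound the number of disk components) is the same framework the paper uses, but your key claim --- that after tightening, every disk component of $R_A$ carries at least $N$ boundary arcs --- is false, and no normalization of $(R,\Gamma)$ can rescue it. Take $g=[a,b]=a\,b\,a^{-1}\,b^{-1}$, so $L=2$, $a_1=a$, $a_2=a^{-1}$. The once-punctured torus bounding $g$ decomposes with one of the two pieces a single disk (a rectangle) whose boundary carries exactly $2$ arcs, labeled $b$ and $b^{-1}$; their product is trivial in $B$ no matter how large the order of $b$ is. Your intermediate inequality $-\chi(R)\ge Ln(1-2/N)$ asserts $-\chi\ge 2(1-2/N)>1$ for $N>4$, while this surface has $-\chi=1$; in the torsion-free case your parenthetical claim that no disks survive would give $\scl([a,b])\ge 1$, whereas the true value is $1/2$. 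The order hypothesis forces $k\ge N$ only when all $k$ syllables on the disk boundary are the same element of $A$ (so that $a_i^k=1$); when distinct syllables are mutually inverse a disk with $2$ arcs is unavoidable, and the abelianized constraint $\sum_i a_i=0$ does not exclude it. So the obstacle you flagged at the end is not a technical tightening issue: the statement you hoped to prove there is simply wrong.

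The missing idea is that the disk counts of $R_A$ and $R_B$ cannot be bounded separately at all (if the $a_i$ pair off into inverse pairs, $R_A$ can consist entirely of $2$-arc disks, i.e.\ $Ln/2$ of them); one must exploit the gluing between the two sides. The paper does this by assigning to each transition between consecutive syllable indices $(i,j)$ on $\partial R_A$ the weight $1/N$ if $i\ge j$ and $1-1/N$ if $i<j$, and showing every disk accumulates weight at least $1$: either all its indices coincide, in which case it has $\ge N$ arcs and weight $\ge N\cdot\frac1N$, or its cyclic index sequence contains both an ascent and a descent, giving weight $\ge \frac1N+(1-\frac1N)$. The gluing rule ($a_i$ is followed by $b_i$, and $b_{j-1}$ by $a_j$, encoded by the map $\phi$ in Lemma~\ref{compscl}) then shows the total weight available to $R_A$ and $R_B$ together is $2L/N+(1-2/N)(L-1)=L-1+2/N$, so $R$ has at most $n(L-1+2/N)$ disk pieces and $-\chi(R)\ge n(1-2/N)$. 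Note the bound on the disk count is $n(L-1+2/N)$, not $2Ln/N$; the crucial ``$-1$'' comes from this joint descent-counting, which is the same mechanism as the descending-edge argument in the paper's proof of Corollary~\ref{free}. Without some version of this step your argument cannot be completed.
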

	The formalism of fatgraphs is inadequate when factors are not free; thus we introduce a new formalism following \cite{DSSS}.
	
	If $G=A*B$, then we can build a $K(G,1)$ by taking $X\defeq K(A,1)\vee K(B,1)$ to be a wedge. Then an admissible surface $R\to X$ decomposes into subsurfaces $R_A\to K(A,1)$ and $R_B\to K(B,1)$. $R_A$ and $R_B$ are surfaces with \emph{corners}, which each contributes $1/4$ to $-\chi$.
	
	Calegari \cite{DSSS} shows how to compute $\scl$ in certain free products by a pair of linear programming problems, one for each of $A$ and $B$.
	
	When $A,B$ are abelian (the case Calegari considers), the contribution of $R_A$ to $\scl$ comes from a \emph{linear} term, together with a nonlinear contribution from \emph{disk components}, i.e. components of $R_A$ which are homeomorphic to $D^2$.
	
	\begin{figure}
		\begin{center}
			\resizebox{319pt}{129pt}{\includegraphics{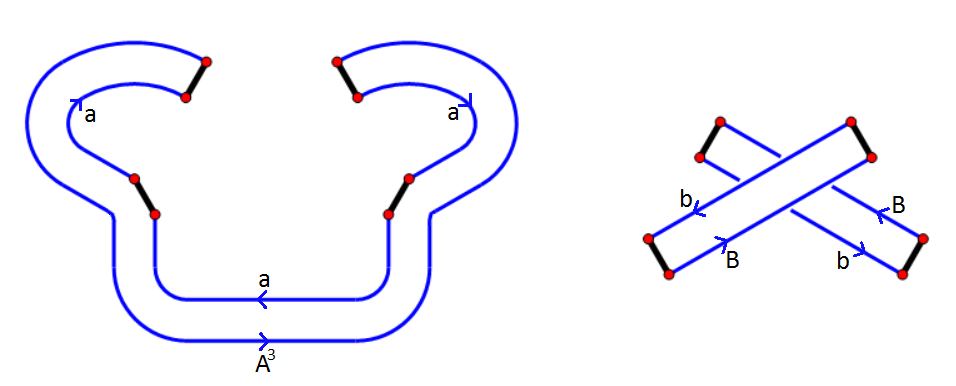}}
			\caption{$R_A$ and $R_B$ with inessential arcs thickened and corners represented by dots.} \label{RARB}
		\end{center}
	\end{figure}
	
	\begin{example}
		Let $R$ be the fatgraph admissible for $g=abaBaBA^3b\in \mathbb{Z}*\mathbb{Z}$ in Figure \ref{Gexample}. It decomposes into $R_A$ and $R_B$ as in Figure \ref{RARB}. $R_A$ contributes $$1/4 \#\text{corners}-\#\text{disks}=2-1=1$$ to $-\chi(R)$, and similarly the contribution of $R_B$ is $0$. In general, to minimize $-\chi(R)/2n$ is to maximize the number of disk components --- a nonlinear problem.
	\end{example}
	
	For arbitrary $A,B$ we obtain a lower bound on $\scl$ by ignoring the (positive) contribution to $-\chi$ of non-disk components of $R_A$ and $R_B$. Equality holds (by a covering argument, see \cite{DSSS} or \cite{My}) when $\scl$ vanishes on both $A$ and $B$. Formally, fix $G$ and $g$ as in Theorem \ref{refine}.
	\begin{definition}
		Let $W$ be a vector space formally spanned by the set of ordered pairs $(i,j)$, $1\le i,j\le L$. Let $$V\defeq\left\{\sum_{1\le i,j\le L}x_{ij}(i,j) \text{ such that }x_{ij}\ge0,\sum_i x_{ij}=1, \sum_j x_{ij}=1\right\}\subset W,$$
		$$\mathcal{D}_A\defeq\left\{\sum_{j=1}^{k}(i_j,i_{j+1}) \text{ such that }i_{k+1}=i_1,\prod_{j=1}^k a_{i_j}=1\in A,k>0\right\}\subset W.$$
		Each element in $\mathcal{D}_A$ is called a \emph{disk vector} in $A$. For any $v\in V$, define $$\kappa_A(v)\defeq\sup\left\{\sum t_i\left|\ v=\sum t_i d_i+\sum x'_{ij}(i,j),t_i\ge0,d_i\in\mathcal{D}_A, x'_{ij}\ge0\right.\right\}.$$
		Define $\mathcal{D}_B$ and $\kappa_B$ similarly. Finally define $\phi:V\to V$ to be the affine map given by $\phi(i,j)=(j-1,i)$ (replace $j-1$ by $L$ if $j=1$).
	\end{definition}

	Up to compression of $R$, the boundary $\partial R_A$ alternates between arcs mapped to one of $a_i$ and \emph{inessential} arcs (mapped to the wedge point). Encode $R_A$ as $v_A=\sum x_{ij}(i,j)\in V$, where $x_{ij}$ is the number, divided by $n(R)$, of inessential arcs on $\partial R_A$ that go from $a_i$ to $a_j$. Disk components contribute to disk vectors. Such an encoding cannot reconstruct $R_A$ but is enough to bound from below the contribution of $R_A$ to $-\chi(R)$, and $\kappa_A(v_A)$ is the (normalized) maximal number of disk components $R_A$ can have. 
	
	Note that $R_A$ and $R_B$ can be glued up along inessential arcs, such that $a_i$ should be followed by $b_i$ and $b_{j-1}$ should be followed by $a_j$. Thus the vectors $v_A$ and $v_B$ corresponding to $R_A$ and $R_B$ satisfy $v_B=\phi(v_A)$. Based on these, $\scl_G$ can be estimated using the following lemma.
	\begin{lemma}\label{compscl}
		Under the notation above, $$2\cdot\scl_G(g)\ge L-\sup_{v_A\in V}\left\{\kappa_A(v_A)+\kappa_B(\phi(v_A))\right\}.$$
		Equality holds if $\scl_A$ and $\scl_B$ are identically zero.
	\end{lemma}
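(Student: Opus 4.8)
The plan is to apply the geometric definition of $\scl$ (Lemma~\ref{geomdef}) to the wedge decomposition $X=K(A,1)\vee K(B,1)$ set up above. First I would take an arbitrary admissible surface $f\colon R\to X$ of degree $n=n(R)$ for $g$; after compressing $R$ and homotoping $f$, we may assume the preimage of the wedge point is a properly embedded $1$-submanifold, and cutting $R$ along it yields subsurfaces-with-corners $R_A\to K(A,1)$ and $R_B\to K(B,1)$ with boundaries alternating between essential arcs and inessential arcs, and with $-\chi(R)=-\chi(R_A)-\chi(R_B)$ under the convention that each corner counts $1/4$. The goal is then to bound $-\chi(R_A)-\chi(R_B)$ below by $n\bigl(L-\kappa_A(v_A)-\kappa_B(\phi(v_A))\bigr)$ for the encoding vector $v_A$, divide by $2n$, and take the infimum over all $R$.

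The first ingredient is a corner count. Reading $g=a_1b_1\cdots a_Lb_L$ cyclically, $\gamma$ crosses the wedge point $2L$ times, so $\partial R$ crosses it $2nL$ times; each crossing is an endpoint of one inessential arc of $R_A$ and carries one corner of $R_A$ and one of $R_B$, for $4nL$ corners altogether, contributing $nL$ to $-\chi(R)$. Discarding the non-negative contribution of every non-disk component of $R_A$ and $R_B$ (a non-disk component with boundary has $-\chi\ge0$ even after adding corners; a closed component is a closed component of $R$, hence not a sphere, so again $-\chi\ge0$) leaves
\[
-\chi(R)\ \ge\ nL-m_A-m_B,
\]
where $m_A,m_B$ count the disk components of $R_A,R_B$.

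The second ingredient bounds $m_A$. Encode $R_A$ by $v_A=\sum_{ij}x_{ij}(i,j)$, where $nx_{ij}$ is the number of inessential arcs from $a_i$ to $a_j$, as above; the row and column sums of $(x_{ij})$ are $1$ since each endpoint of an essential arc meets exactly one inessential arc and $\partial f$ has degree $n$, so $v_A\in V$ (the Birkhoff polytope, which is compact, so the supremum in the statement is attained). Every disk component of $R_A$ bounds a disk in $K(A,1)$, so the product of its essential labels is trivial in $A$ and its inessential arcs form a disk vector in $\mathcal{D}_A$; grouping disk components by their disk vector gives $v_A=\sum_\ell(c_\ell/n)d_\ell+(\text{non-negative vector})$ with $d_\ell\in\mathcal{D}_A$ and $\sum_\ell c_\ell=m_A$, whence $m_A/n\le\kappa_A(v_A)$, and similarly $m_B/n\le\kappa_B(v_B)$. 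Since the inessential arcs are glued so that $a_i$ is followed by $b_i$ and $b_{j-1}$ by $a_j$, we have $v_B=\phi(v_A)$. Combining, $-\chi(R)/2n\ge\tfrac12\bigl(L-\kappa_A(v_A)-\kappa_B(\phi(v_A))\bigr)\ge\tfrac12\bigl(L-\sup_{v\in V}(\kappa_A(v)+\kappa_B(\phi(v)))\bigr)$, and taking the infimum over admissible $R$ proves the inequality.

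For the equality when $\scl_A$ and $\scl_B$ vanish identically, I would reverse the construction. As $V$ and the piecewise-linear concave functions $\kappa_A$ and $\kappa_B\circ\phi$ are all defined over $\mathbb{Q}$, the supremum is attained at a rational $v_A$ with a rational optimal decomposition into disk vectors plus a leftover non-negative vector. Realize the disk vectors by genuine disk components in $K(A,1)$ (possible since the corresponding products vanish), realize the leftover essential/inessential arc pattern by surfaces mapping to $K(A,1)$ with $-\chi/n$ as small as we like (which is what $\scl_A\equiv0$ provides), do the same for $B$, and glue the inessential arcs of the two sides according to $v_B=\phi(v_A)$ after passing to a common multiple-degree cover so the pairings are consistent; the resulting surfaces satisfy $-\chi/2n\to\tfrac12(L-\sup)$, giving $\scl_G(g)\le\tfrac12(L-\sup)$. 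I expect this last step to be the main obstacle: the corner and Euler-characteristic bookkeeping for the inequality is routine once the decomposition is set up, but reassembling the finitely many combinatorial pieces into an honest admissible surface needs the covering argument of \cite{DSSS} (or \cite{My}), which I would cite here.
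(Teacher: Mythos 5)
Your proposal is correct and follows essentially the same route as the paper: the paper itself disposes of this lemma by citing Corollary 4.17 of \cite{My}, and the surrounding discussion (decomposition into $R_A$, $R_B$ with corners contributing $1/4$ to $-\chi$, disk components giving disk vectors bounded by $\kappa_A$, $\kappa_B$, the gluing condition $v_B=\phi(v_A)$, and a covering argument for the equality case) is exactly the argument you have written out in detail. Your deferral to the covering argument of \cite{DSSS}/\cite{My} for the equality case matches what the paper does.
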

	This is Corollary 4.17 in \cite{My} in the case $G_1=A$, $G_2=B$ and $z=g$ since $(v_A,v_B)\in Y_l$ means $v_B=\phi(v_A)$ in our notation and we have $|v_A|=|v_B|=L$.
	
	Now we are ready to prove Theorem \ref{refine}.
	\begin{proof}[Proof of Theorem \ref{refine}]
		By Lemma \ref{compscl}, it suffices to show $$\kappa_A(v_A)+\kappa_B(\phi(v_A))\le L-1+2/N$$ for any $v_A=\sum x_{ij}(i,j)\in V$.
		
		\begin{claim}
			For all $v=\sum x_{ij}(i,j)\in V$ we have $$
			\kappa_A(v)\le\frac{1}{N}\sum_{i\ge j} x_{ij}+\left(1-\frac{1}{N}\right)\sum_{i<j} x_{ij},\text{ and similarly for }\kappa_B(v).$$
		\end{claim}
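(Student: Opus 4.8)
The plan is to reduce the claim to a single combinatorial inequality for one disk vector, and then finish by a routine comparison of coefficients. Fix an arbitrary expression $v=\sum_\ell t_\ell d_\ell+\sum_{p,q}x'_{pq}(p,q)$ of the type allowed in the definition of $\kappa_A(v)$, where each $d_\ell=\sum_{j=1}^{k_\ell}(i^\ell_j,i^\ell_{j+1})\in\mathcal{D}_A$ with the cyclic convention $i^\ell_{k_\ell+1}=i^\ell_1$. For a disk vector $d=\sum_{j=1}^{k}(i_j,i_{j+1})$ I will write $D(d)=\#\{j:i_j\ge i_{j+1}\}$ and $U(d)=\#\{j:i_j<i_{j+1}\}$, so that $D(d)+U(d)=k$, the total coefficient of $d$ on basis vectors $(p,q)$ with $p\ge q$ equals $D(d)$, and the total coefficient on those with $p<q$ equals $U(d)$. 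Since all $t_\ell$ and $x'_{pq}$ are non-negative, comparing the coefficient of each $(p,q)$ on both sides of the expression gives $\sum_{p\ge q}x_{pq}\ge\sum_\ell t_\ell D(d_\ell)$ and $\sum_{p<q}x_{pq}\ge\sum_\ell t_\ell U(d_\ell)$.

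The key step is the per-disk-vector bound
\[
\tfrac1N D(d)+\bigl(1-\tfrac1N\bigr)U(d)\ge1\qquad\text{for every }d\in\mathcal{D}_A,
\]
which, using $D(d)+U(d)=k$, is equivalent to $k+(N-2)U(d)\ge N$. I would prove this by two cases. If $U(d)=0$, then reading around the cycle gives $i_1\ge i_2\ge\cdots\ge i_k\ge i_1$, forcing all $i_j$ equal; calling the common value $i$, we get $a_i^{k}=\prod_j a_{i_j}=1$ with $a_i\ne id$ of order at least $N$, so $k\ge N$ and the inequality holds. If $U(d)\ge1$ then not all $i_j$ are equal, so $k\ge2$, and $k+(N-2)U(d)\ge2+(N-2)=N$ since $N\ge2$.

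Granting this, I would conclude by chaining the inequalities: for the fixed expression above,
\[
\sum_\ell t_\ell\le\sum_\ell t_\ell\Bigl(\tfrac1N D(d_\ell)+(1-\tfrac1N)U(d_\ell)\Bigr)\le\tfrac1N\sum_{p\ge q}x_{pq}+(1-\tfrac1N)\sum_{p<q}x_{pq}.
\]
Taking the supremum over all admissible expressions for $v$ then yields $\kappa_A(v)\le\frac1N\sum_{i\ge j}x_{ij}+(1-\frac1N)\sum_{i<j}x_{ij}$ (and in particular $\kappa_A(v)<\infty$). The statement for $\kappa_B$ follows verbatim with $b_i$ and $\mathcal{D}_B$ in place of $a_i$ and $\mathcal{D}_A$.

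The only non-formal ingredient — and hence the main obstacle — is isolating the correct per-disk-vector inequality and seeing why the threshold is exactly $N$. This is precisely where the cyclic combinatorial fact (a cyclically weakly-decreasing sequence is constant) meets the torsion hypothesis (a relation $a_i^{k}=1$ forces $k\ge N$), and it is these two inputs together that produce the weights $1/N$ and $1-1/N$ in the claim; everything else is bookkeeping with non-negative coefficients.
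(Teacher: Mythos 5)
Your proposal is correct and follows essentially the same route as the paper: reduce to showing that each disk vector contributes at least $1$ to the right-hand side, handle the constant cyclic sequence via the torsion bound $k\ge N$, and handle the non-constant case by exhibiting enough ascending/descending pairs (the paper notes one strict descent and one strict ascent, contributing $1/N+(1-1/N)=1$, which is equivalent to your $k+(N-2)U(d)\ge N$ computation). The explicit coefficient-comparison bookkeeping you include is implicit in the paper's phrase ``each $d_i$ contributes at least $1$ to the right hand side.''
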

		\begin{proof}
			Suppose $v=\sum t_i d_i+\sum x'_{ij}(i,j)$ with $t_i\ge0$, $d_i\in \mathcal{D}_A$ and $x'_{ij}\ge0$. It suffices to show that each $d_i$ contributes at least $1$ to the right hand side of the inequality.
			
			For any disk vector $d=\sum_{j=1}^{k}(i_j,i_{j+1})$, if all $i_j$ are equal, then $k\ge N$, and thus the contribution of $d$ to the right hand side is $k/N\ge1$.
			
			Suppose there are at least two distinct $i_j$'s, then there exist $j_1$ and $j_2$ such that $i_{j_1}>i_{j_1+1}$ and $i_{j_2}<i_{j_2+1}$, thus the contribution of $d$ to the right hand side is at least $1/N+(1-1/N)=1$.
		\end{proof}
		
		Geometrically, each $v\in V$ can be thought as an $L\times L$ doubly stochastic matrix. Since each row sums to $1$, the estimate in the claim above is equivalent to saying that $\kappa_A(v)-L/N$ and $\kappa_B(v)-L/N$ are at most $(1-2/N)$ times the sum of entries in the strictly upper triangular region $U$. The pull back $\phi^{-1}(U)$ and $U$ together form $L-1$ columns (Figure \ref{matrix} illustrates the case $L=5$), in which the entries sum to $L-1$. Combining these, we get the desired inequality.
		
		Formally, by the claim above, for any $v_A=\sum x_{ij}(i,j)\in V$, we have
		\begin{eqnarray*}
			\kappa_A(v_A)+\kappa_B(\phi(v_A))&\le&\frac{2L}{N}+\left(1-\frac{2}{N}\right)\sum_{i<j}x_{ij}+\left(1-\frac{2}{N}\right)\sum_{1\le j-1<i} x_{ij}\\
			&=&\frac{2L}{N}+\left(1-\frac{2}{N}\right)\left[\sum_{i<j}x_{ij}+\sum_{2\le j\le i} x_{ij}\right]\\
			&=&\frac{2L}{N}+\left(1-\frac{2}{N}\right)\sum_{2\le j\le L}x_{ij}\\
			&=&\frac{2L}{N}+\left(1-\frac{2}{N}\right)(L-1)\\
			&=&L-1+\frac{2}{N}
		\end{eqnarray*}
		as desired.
	\end{proof}
	\begin{figure}
		\begin{center}
			\resizebox{245pt}{110pt}{\includegraphics{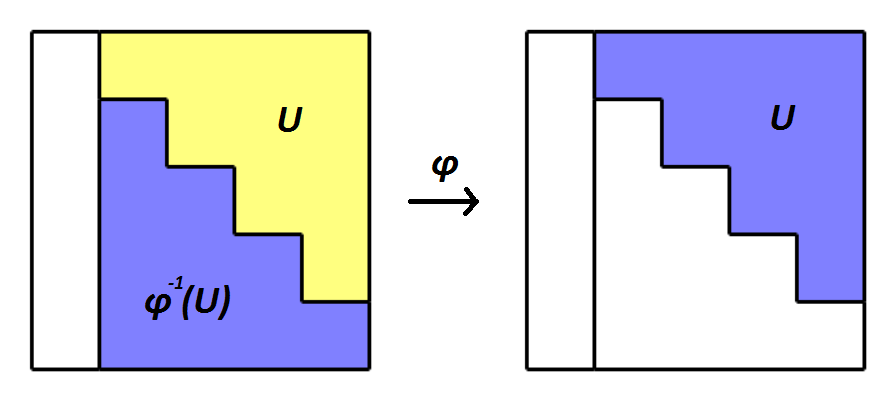}}
			\caption{Matrix illustration} \label{matrix}
		\end{center}
	\end{figure}
	\begin{remark}
		The estimate in Theorem \ref{refine} is sharp, since we have (\cite{My}, Proposition 5.6) $$\scl_{G_1*G_2}{[a,b]}=\frac{1}{2}-\frac{1}{\min(n_a,n_b)},$$ where $a\in G_1$, $b\in G_2$ and $n_a,n_b$ are the orders of $a,b$ respectively.
	\end{remark}
	Now we apply Theorem \ref{refine} with $N=+\infty$ to prove
	\setcounter{theoremx}{0}
	\begin{theoremx}
		Let $G=*_\lambda G_\lambda$ be a free product of torsion-free groups, and suppose $g\in [G,G]$ is not conjugate into any $G_\lambda$, then $$\scl_G(g)\ge1/2.$$
	\end{theoremx}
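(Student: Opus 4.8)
The plan is to deduce Theorem A from Theorem \ref{refine}: since all of the quantitative work is already contained in that theorem, what remains is a finiteness reduction together with a passage from an arbitrary free product of torsion-free groups to a two-factor splitting to which Theorem \ref{refine} applies with $N=+\infty$.

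First I would reduce to the case of finitely many factors. Up to conjugacy $g$ is a reduced word involving only finitely many of the $G_\lambda$, say $G_{\lambda_1},\dots,G_{\lambda_k}$, so $g$ is conjugate into the free factor $H\defeq G_{\lambda_1}*\cdots*G_{\lambda_k}$. This $H$ is a retract of $G$: the homomorphism $r\colon G\to H$ that is the identity on each $G_{\lambda_i}$ and trivial on every other $G_\lambda$ restricts to the identity on $H$. Since $\scl$ is non-increasing under homomorphisms — which is immediate from Lemma \ref{geomdef}, as a homomorphism carries an admissible surface for $g$ to one for its image with the same degree and Euler characteristic — and $r(g)=g$, we get $\scl_G(g)\ge\scl_H(g)$; moreover $g\in[G,G]\cap H$ forces $g=r(g)\in r([G,G])\subseteq[H,H]$, and $g$ remains non-conjugate in $H$ into any factor. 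So it suffices to bound $\scl_H(g)$ from below.

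Next I would split $H$ into two factors. As $g$ is not conjugate into any $G_{\lambda_i}$ it is nontrivial, so its cyclically reduced form in $H$ contains at least one syllable; relabel so that a $G_{\lambda_1}$-syllable occurs, and put $A\defeq G_{\lambda_1}$, $B\defeq G_{\lambda_2}*\cdots*G_{\lambda_k}$ (here $k\ge2$, since $k=1$ would make $g$ conjugate into a factor). Both $A$ and $B$ are torsion-free, because a free product of torsion-free groups is torsion-free: any finite-order element of a free product is conjugate into a factor. Regrouping the cyclically reduced word of $g$ along the splitting $H=A*B$, the $A$-syllables are exactly the $G_{\lambda_1}$-syllables and each maximal block between two consecutive such syllables is a nonempty reduced word in $B$, hence a nontrivial element of $B$; this exhibits $g$, up to conjugacy and cyclic rotation, as $g=a_1b_1\cdots a_Lb_L$ with $a_i\in A\setminus\{id\}$, $b_i\in B\setminus\{id\}$, $L\ge1$, and still lying in $[H,H]$. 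Since $A$ and $B$ are torsion-free, every $a_i$ and every $b_i$ has infinite order, so the constant $N$ of Theorem \ref{refine} is $+\infty$ and that theorem gives $\scl_H(g)\ge\tfrac12-\tfrac1N=\tfrac12$; combined with $\scl_G(g)\ge\scl_H(g)$ this proves Theorem A.

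I do not expect any serious obstacle beyond Theorem \ref{refine} itself. The two things that require care are that the retract-plus-monotonicity step genuinely reduces the problem to a finite free product, and that regrouping the normal form of $g$ yields an honestly alternating word with $L\ge1$ and with $N=+\infty$; both are routine consequences of the normal form theorem for free products, where non-conjugacy into a factor is precisely what gives $L\ge1$ and torsion-freeness is precisely what gives $N=+\infty$. Alternatively, the last step can be organized as an induction on $k$: write $H=G_{\lambda_1}*(G_{\lambda_2}*\cdots*G_{\lambda_k})$, apply the inductive hypothesis to $G_{\lambda_2}*\cdots*G_{\lambda_k}$ when $g$ is conjugate into it, and Theorem \ref{refine} with $N=+\infty$ otherwise.
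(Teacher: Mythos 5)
Your proposal is correct and follows essentially the same route as the paper: reduce to a finite free factor $H$ via the retraction and monotonicity of $\scl$, then apply Theorem \ref{refine} with $N=+\infty$ to a two-factor splitting of $H$ into torsion-free pieces. The only immaterial difference is that you choose the splitting $A=G_{\lambda_1}$, $B=G_{\lambda_2}*\cdots*G_{\lambda_k}$ directly, with $G_{\lambda_1}$ a factor actually occurring in the cyclically reduced form of $g$, whereas the paper organizes this step as an induction on the number of factors --- an alternative you yourself note at the end.
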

	\begin{proof}
		First notice that for each $g\in [G,G]$, there are finitely many $\lambda_1,\lambda_2,\ldots\lambda_n$ such that $g\in[H,H]$ where $H=*_{i=1}^n G_{\lambda_i}\le G$. Moreover, we have $\scl_H(g)=\scl_G(g)$ since $H$ is a retract of $G$ and $\scl$ is monotone under homomorphism. Thus it suffices to show $\scl_H(g)\ge1/2$.
		
		Now we induct on $n$. The case $n=2$ directly follows from Theorem \ref{refine}. Now suppose $n>2$, then $G$ is the free product of two torsion free groups $*_{i=1}^{n-1}G_{\lambda_i}$ and $G_{\lambda_n}$. If $g$ is not conjugate into either of them, then the result follows from Theorem \ref{refine}; otherwise, by assumption, $g$ is conjugate into $*_{i=1}^{n-1}G_{\lambda_i}$ but not any $G_{\lambda_i}$, then the result follows from the inductive assumption.
	\end{proof}
	Finally, Corollary \ref{clscl} and Theorem \ref{refine} together imply the following result about commutator length similar to the results obtained by Ivanov--Klyachko \cite{IK}:
	\begin{cor}\label{clversion}
		Let $G=A*B$ and $g=a_1b_1\cdots a_Lb_L$ with $a_i\in A\backslash\{id\}$, $b_i\in B\backslash\{id\}$ and $L\ge1$ such that $g\in[G,G]$. Let $N\ge2$ be the minimal order of $a_i$ and $b_i$, and $g_i$ be conjugates of $g$, then $$2\cdot\cl(g_1^{n_1}\ldots g_m^{n_m})-2\ge \sum_{i=1}^{m}(n_i-1)-\left[\frac{2}{N}\sum_{i=1}^{m} n_i\right],$$ where $[x]$ denotes the largest integer no greater than $x$.
	\end{cor}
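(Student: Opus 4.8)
The plan is to combine the scl lower bound of Theorem \ref{refine} with the general scl-to-cl translation in Corollary \ref{clscl}, and then chase the resulting inequality into integer form. Concretely, by Theorem \ref{refine} we have $\scl_G(g) \ge 1/2 - 1/N$ under the hypotheses. Feeding this into Corollary \ref{clscl} (applied to the product $g_1^{n_1}\cdots g_m^{n_m}$ of conjugates of $g$, which is legitimate provided we are not in the degenerate case $m=1$, $g_1^{n_1}=\mathrm{id}$) gives
\[
\cl(g_1^{n_1}\cdots g_m^{n_m}) \ge \left(\tfrac{1}{2}-\tfrac{1}{N}\right)\left|\sum_{i=1}^m n_i\right| - \tfrac{m}{2} + 1.
\]
Multiplying by $2$ and rearranging yields
\[
2\,\cl(g_1^{n_1}\cdots g_m^{n_m}) - 2 \ge \left|\sum_{i=1}^m n_i\right| - \tfrac{2}{N}\left|\sum_{i=1}^m n_i\right| - m = \sum_{i=1}^m(n_i-1) - \tfrac{2}{N}\left|\sum_{i=1}^m n_i\right|,
\]
where in the last step I would like to replace $\left|\sum n_i\right|$ by $\sum n_i$; this is harmless because the statement of the corollary already writes $\sum_{i=1}^m n_i$ without absolute values, so I would either absorb the sign into the $g_i^{n_i}$ (replacing each $g_i$ by its inverse if the total sum is negative, which does not change being a conjugate of... wait, it changes $g$ to $g^{-1}$) — more cleanly, one argues that WLOG $\sum n_i \ge 0$ since $\scl$ and $\cl$ are invariant under $g\mapsto g^{-1}$ and $a_1b_1\cdots a_Lb_L$ has its inverse again of the required cyclically-alternating form with the same $N$.

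The remaining step is the passage from the real inequality to the integer bound with the floor function. Since the left-hand side $2\,\cl(\cdot) - 2$ and the quantity $\sum_{i=1}^m(n_i-1)$ are both integers, the inequality
\[
2\,\cl(g_1^{n_1}\cdots g_m^{n_m}) - 2 \ge \sum_{i=1}^m(n_i-1) - \tfrac{2}{N}\sum_{i=1}^m n_i
\]
forces
\[
2\,\cl(g_1^{n_1}\cdots g_m^{n_m}) - 2 \ge \sum_{i=1}^m(n_i-1) - \left\lfloor \tfrac{2}{N}\sum_{i=1}^m n_i \right\rfloor,
\]
because an integer that is $\ge A - t$ for a real $t\ge 0$ with $A$ an integer is automatically $\ge A - \lfloor t\rfloor$. (Here I use $[x]$ for $\lfloor x\rfloor$ as in the statement.) That is exactly the claimed formula, so I would then just need to dispose of the degenerate case: if $m=1$ and $g_1^{n_1}=\mathrm{id}$ then $\cl = 0$, and one checks the asserted inequality directly, $-2 \ge (n_1 - 1) - \lfloor 2n_1/N\rfloor$; since $g_1$ is a nontrivial element of $[G,G]$ not conjugate into a factor (it has infinite order as it is a nontrivial element of a free product with the alternating form, so $g_1^{n_1}\ne\mathrm{id}$ for $n_1\ne 0$), this case cannot actually occur when $n_1 \ne 0$, and for $n_1=0$ the product is empty.

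The only genuinely delicate point is the bookkeeping around signs and the degenerate case, i.e.\ confirming that Corollary \ref{clscl} applies verbatim and that the floor-function rounding is valid; the mathematical content is entirely contained in Theorem \ref{refine} and Corollary \ref{clscl}, so I expect no real obstacle beyond making the above rounding argument precise and noting the integrality of both sides.
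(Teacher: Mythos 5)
Your proposal is correct and is exactly the paper's argument: the paper offers no separate proof, stating only that Corollary \ref{clscl} and Theorem \ref{refine} together imply the result, and your substitution of $\scl_G(g)\ge 1/2-1/N$ into Corollary \ref{clscl} followed by the integrality/floor argument is precisely that implication, with the sign and degenerate-case bookkeeping handled appropriately.
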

	\bibliographystyle{amsplain}

\end{document}